\newcommand\blfootnote[1]{%
  \begingroup
  \renewcommand\thefootnote{}\footnote{#1}%
  \addtocounter{footnote}{-1}%
  \endgroup
}
\newtheorem{theorem}{Theorem}[section]
\newtheorem{lemma}[theorem]{Lemma}
\newtheorem{corollary}[theorem]{Corollary}
\newtheorem{proposition}[theorem]{Proposition}
\DeclareMathOperator{\tr}{tr}
\DeclareMathOperator{\spec}{sp}
\def\Fou{{\cal F}}
\def\Z{\ns Z}
\def\a{\mbox{\boldmath $a$}}
\def\b{\mbox{\boldmath $b$}}
\def\x{\mbox{\boldmath $x$}}
\def\y{\mbox{\boldmath $y$}}
\def\z{\mbox{\boldmath $z$}}
\def\vec0{\mbox{\boldmath $0$}}
\def\A{\mbox{\boldmath $A$}}
\def\B{\mbox{\boldmath $B$}}
\def\C{\mbox{\boldmath $C$}}
\def\F{\mbox{\boldmath $F$}}
\def\G{\Gamma}
\def\I{\mbox{\boldmath $I$}}
\def\S{\mbox{\boldmath $S$}}
\def\Y{\mbox{\boldmath $Y$}}
\def\Z{\ns{Z}}
\def\I{\mbox{\boldmath $I$}}
\def\vecZ{\mbox{\boldmath $Z$}}
\def\C{\mathbb C}
\def\G{\Gamma}
\def\Na{\mathbb N}
\def\Re{\mathbb R}
\def\Z{\mathbb Z}
\begin{document}

\title{An algebraic approach to lifts of digraphs
\thanks{Research of the first two authors is supported by MINECO under project MTM2014-60127-P, and by AGAUR under project 2014SGR1147.
The fifth author acknowledges support from the research grants APVV 0136/12,  APVV-15-0220, VEGA 1/0026/16, and VEGA 1/0142/17. }}
\author{C. Dalf\'o$^a$, M. A. Fiol$^b$, M. Miller$^c$, J. Ryan$^d$, J. \v{S}ir\'a\v{n}$^e$\\
$^{a}${\small Departament de Matem\`atiques} \\
{\small Universitat Polit\`ecnica de Catalunya,} 
{\small Barcelona, Catalonia} \\
{\small {\tt{cristina.dalfo@upc.edu}}} \\
$^{b}${\small Departament de Matem\`atiques} \\
{\small Universitat Polit\`ecnica de Catalunya} \\
{\small Barcelona Graduate School of Mathematics,} 
{\small  Barcelona, Catalonia} \\
{\small{\tt{miguel.angel.fiol@upc.edu}}} \\
$^c${\small School of Mathematical and Physical Sciences} \\
{\small The University of Newcastle,} 
{\small Newcastle, Australia}\\
$^c${\small Department of Mathematics} \\
{\small University of West Bohemia,} 
{\small Plze\v{n}, Czech Republic} \\
$^d${\small School of Electrical Engineering and Computing}\\
{\small The University of Newcastle, }
{\small Newcastle, Australia}\\
{\small{\tt {joe.ryan@newcastle.edu.au}}}\\
$^e${\small Department of Mathematics and Statistics}\\
{\small The Open University, Milton Keynes, UK}\\
$^e${\small Department of Mathematics and Descriptive Geometry}\\
{\small Slovak University of Technology, Bratislava, Slovak Republic}\\
{\small{\tt {j.siran@open.ac.uk}}}
}

\maketitle
\vskip -1cm

\blfootnote{
\begin{minipage}[l]{0.3\textwidth} \includegraphics[trim=10cm 6cm 10cm 5cm,clip,scale=0.15]{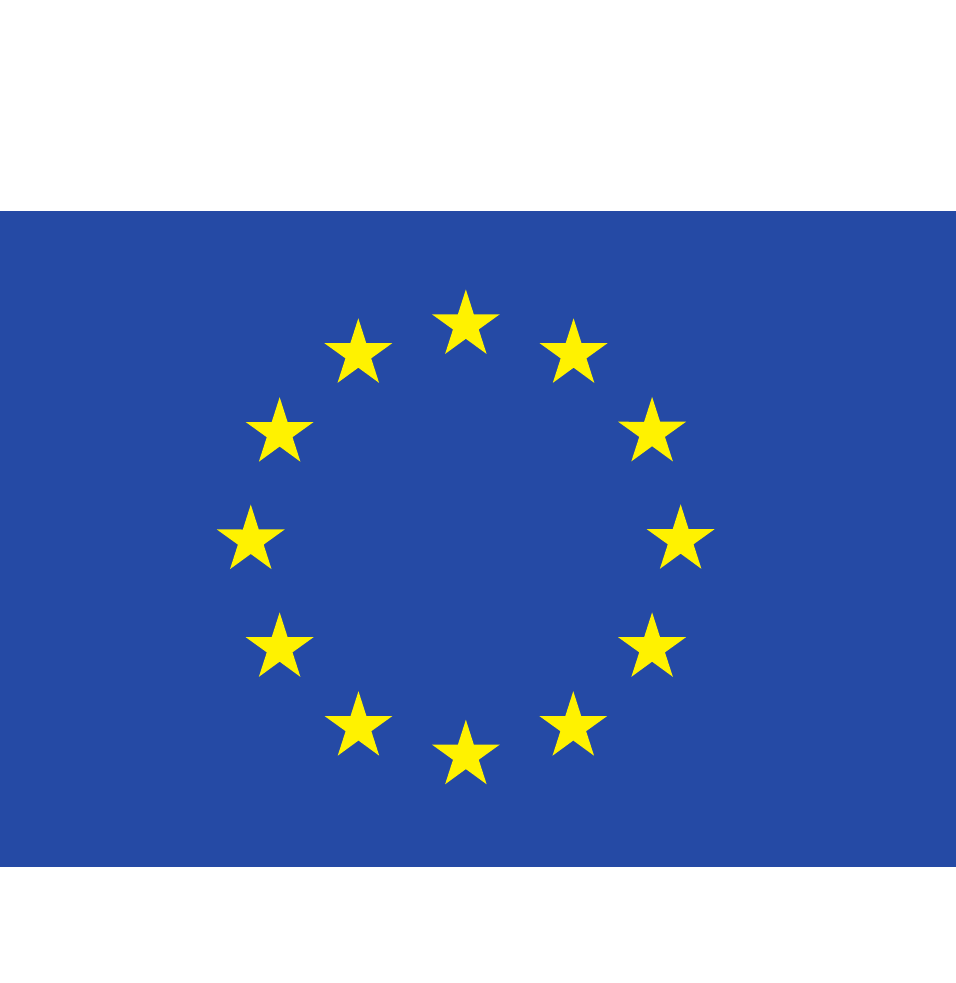} \end{minipage}  \hspace{-2cm} \begin{minipage}[l][1cm]{0.79\textwidth}
   The first author has also received funding from the European Union's Horizon 2020 research and innovation programme under the Marie Sk{\l}odowska-Curie grant agreement No 734922.
  \end{minipage}}

\begin{center}
This paper is dedicated to the memory of Mirka Miller,\\
who enjoyed polynomial matrices very much.
\end{center}

\newpage

\begin{abstract}
We study the relationship between two key concepts in the theory of (di)graphs: the quotient digraph, and the lift $\Gamma^\alpha$ of a base (voltage) digraph. These techniques contract or expand a given digraph in order to study its characteristics, or obtain more involved structures.
This study is carried out by introducing a quotient-like matrix,  with complex polynomial entries, which fully represents $\Gamma^\alpha$. In particular, such a matrix gives the quotient matrix of a regular partition of $\Gamma^\alpha$, and
when the involved group is Abelian,
it completely determines the spectrum of $\Gamma^\alpha$.
As some examples of our techniques, we study some basic properties of the Alegre digraph. In addition we completely characterize the spectrum of a new family of digraphs, which contains the generalized Petersen graphs, and that of the Hoffman-Singleton graph.
\end{abstract}

\noindent{\em Keywords:} Digraph, adjacency matrix, regular partition, quotient digraph, Abelian group, spectrum, voltage digraphs, lifted digraph, generalized Petersen graph.

\noindent{\em Mathematics Subject Classifications:} 05C20; 05C50; 15A18.

\section{Introduction}
In the study of interconnection and communication networks, the theory of digraphs plays a key role as, in many cases, the links between nodes are unidirectional.
Within this theory, there are two concepts that have shown to be very fruitful to construct good and efficient networks. Namely, those of quotient digraphs and lifts of voltage digraphs.
Roughly speaking, quotient digraphs allow us to give a simplified or `condensed' version of a larger digraph, while the voltage digraph technique does the converse, by `expanding' a smaller digraph into its `lift'. From this point of view, it is natural that both techniques have close relationships. In this paper we explore some of such interrelations.

The paper is organized as follows. In the rest of this section, we give some basic background information. In Section \ref{sec:reg-part} we present the basic definition and results on regular partitions and their corresponding quotient digraphs.
In  Section~\ref{sec:voltage}, we recall the definitions of voltage and lifted digraphs.
Section \ref{sec:matrixrep} is devoted to studying a representation of a
lifted digraph with a quotient-like matrix whose size equals the order of the (much smaller) base digraph. In particular, it is shown that such a matrix can be used to deduce combinatorial properties of the lifted digraph. Following this approach, and as a main result, Section 5 present a new method to completely determine the spectrum of the lift by using only the spectrum of the quotient digraph. The results are illustrated by following some examples. The first one is the so-called Alegre digraph (first shown by Fiol, Alegre, and Yebra in \cite{FiYeAl84}), which is the largest digraph (with order 25) with degree 2 and diameter 4. This digraph can be constructed as the lifted digraph of a voltage digraph, and a major part of its structure is characteristic of a line digraph.
The second example is a new family of digraphs which, as a particular case, contains the well-know
generalized Petersen graphs. Finally, we recalculate the spectrum of the Hoffman-Singleton graph without using its  strong regularity character.

\subsection{Background}
Here, we recall some basic terminology and simple results concerning digraphs and their
spectra. For the concepts and/or results not presented here, we refer the reader to some of
the basic textbooks on the subject; for instance, Bang-Jensen and Gutin \cite{bg09}, Chartrand and
Lesniak~\cite{cl96}, or Diestel~\cite{d10}.

Through this paper, $\Gamma=(V,E)$ denotes a digraph, with vertex set $V$ and arc set $E$, that is
\emph{strongly connected}, namely, each vertex is connected to all other vertices by traversing the arcs in their corresponding direction. An arc from vertex $u$ to vertex $v$ is denoted by either $(u,v)$, $uv$, or $u\rightarrow v$. We allow {\em loops} (that is, arcs from a vertex to itself), and {\em multiple arcs}. The set of vertices adjacent to and from $v\in V$ is denoted by $\G^{-}(v)$ and $\G^{+}(v)$, respectively. Such vertices are referred to as {\em in-neighbors} and {\em out-neighbors} of $v$, respectively. Moreover, $\delta^-(v)=|\G^{-}(v)|$ and $\delta^+(v)=|\G^{+}(v)|$ are the \emph{in-degree} and \emph{out-degree} of vertex $v$, and $\G$ is {\em $d$-regular} when $\delta^+(v)=\delta^-(v)=d$ for all $v\in V$. Similarly, given $U\subset V$, $\G^{-}(U)$ and $\G^{+}(U)$ represent the sets of vertices adjacent to and from (the vertices) of $U$, respectively. Given two vertex subsets $X,Y\subset V$, the subset of arcs from $X$ to $Y$ is denoted by $e(X,Y)$.

The spectrum of $\Gamma$, denoted by $\spec \Gamma=\{\lambda_0^{m_0},\lambda_1^{m_1},\ldots,\lambda_d^{m_d}\}$, is constituted by the distinct
eigenvalues $\lambda_i$ with the corresponding algebraic multiplicities $m_i$, for $i=0,1,\ldots,d$, of its adjacency matrix $\A$.

\section{Regular partitions and quotient digraphs}
\label{sec:reg-part}
Let $\Gamma=(V,E)$ be a digraph with $n$ vertices and adjacency matrix $\A$. A partition $\pi$ of its vertex set $V=U_1\cup U_2 \cup\cdots\cup U_m$, for $m\le n$, is called {\em regular} if the number of
arcs from a vertex $u\in U_i$ to vertices in $U_j$ only depends on $i$ and $j$. Let $c_{ij}$ be the
number of arcs that join a fixed vertex in $U_i$ to vertices in $U_j$. A matrix characterization of
this property is the following: Let $\S$ be the  $0$-$1$ $(n\times m)$-matrix whose $i$-th
column is the normalized characteristic vector of $U_i$, so that $\S^{\top}\S=\I$ (the identity
matrix), and consider the so-called {\em quotient matrix}
\begin{equation}
\label{B=S^TAS}
\B=\S^{\top}\A\S.
\end{equation}
Then, it can be easily checked that $\pi$ is regular if and only if \begin{equation}
\label{SB=AS}
\S\B=\A\S.
\end{equation}
The digraph $\pi(\G)$ whose (weighted) adjacency matrix is the quotient matrix is called \emph{quotient digraph}, and its arcs can have weight different from 1. More precisely, the vertices of the quotient
digraph are the subsets $U_i$, for $i=1,2,\ldots,m$, and the arc from vertex $U_i$ to vertex $U_j$ has
weight $c_{ij}$.
For the case of quotient digraphs obtained from non-directed graphs, see Godsil~\cite[Lemma 2.1]{g93}.
We have the following basic result, where the regular partition of $V$ is also called a {\it
regular} partition of $\A$.

\begin{lemma}
\label{lema-sp}
Every eigenvalue of the quotient matrix $\B$ of a regular partition of $\A$ is also an eigenvalue of
$\A$, that is, $\spec \B\subset\spec \A$.
\end{lemma}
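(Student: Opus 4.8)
The plan is to use the matrix identity $\S\B=\A\S$ that characterizes regular partitions, together with the fact that $\S$ has full column rank $m$ (since $\S^{\top}\S=\I$, its columns are orthonormal and hence linearly independent). The idea is that $\S$ carries eigenvectors of $\B$ to eigenvectors of $\A$, so every eigenvalue of $\B$ is inherited by $\A$.

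First I would take an arbitrary eigenvalue $\lambda$ of $\B$ with a corresponding (nonzero) eigenvector $\vecv\in\Com^m$, so that $\B\vecv=\lambda\vecv$. Then I would consider the vector $\S\vecv\in\Com^n$ and compute, using \eqref{SB=AS},
\begin{equation}
\A(\S\vecv)=(\A\S)\vecv=(\S\B)\vecv=\S(\B\vecv)=\S(\lambda\vecv)=\lambda(\S\vecv).
\end{equation}
Thus $\S\vecv$ is an eigenvector of $\A$ with eigenvalue $\lambda$, provided $\S\vecv\neq\vec0$. To see that $\S\vecv\neq\vec0$, note that $\S^{\top}(\S\vecv)=(\S^{\top}\S)\vecv=\I\vecv=\vecv\neq\vec0$, so $\S\vecv$ cannot be the zero vector. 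Hence $\lambda\in\spec\A$, and since $\lambda$ was an arbitrary eigenvalue of $\B$, we conclude $\spec\B\subset\spec\A$.

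The only subtlety — and the one point worth stating carefully rather than a genuine obstacle — is the non-vanishing of $\S\vecv$, which is exactly where the normalization $\S^{\top}\S=\I$ (equivalently, the full column rank of $\S$) is used; without it the argument would fail. If one also wanted the statement to respect multiplicities, one would additionally observe that $\vecv\mapsto\S\vecv$ is an injective linear map sending the $\lambda$-eigenspace of $\B$ into the $\lambda$-eigenspace of $\A$, so the algebraic (in fact geometric, and by a short extra argument algebraic) multiplicity does not decrease; but for the set inclusion asserted in the lemma the computation above suffices.
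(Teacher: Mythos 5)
Your proof is correct and is exactly the standard argument underlying this lemma (the paper itself omits the proof, deferring to Godsil's Lemma 2.1 for the undirected case): use $\A\S=\S\B$ to push an eigenvector $\vecv$ of $\B$ to $\S\vecv$, and use $\S^{\top}\S=\I$ to guarantee $\S\vecv\neq\vec0$. Nothing is missing; your remark on multiplicities is a sensible aside but not needed for the stated set inclusion.
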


\begin{figure}[t]
\begin{center}
\includegraphics[width=12cm]{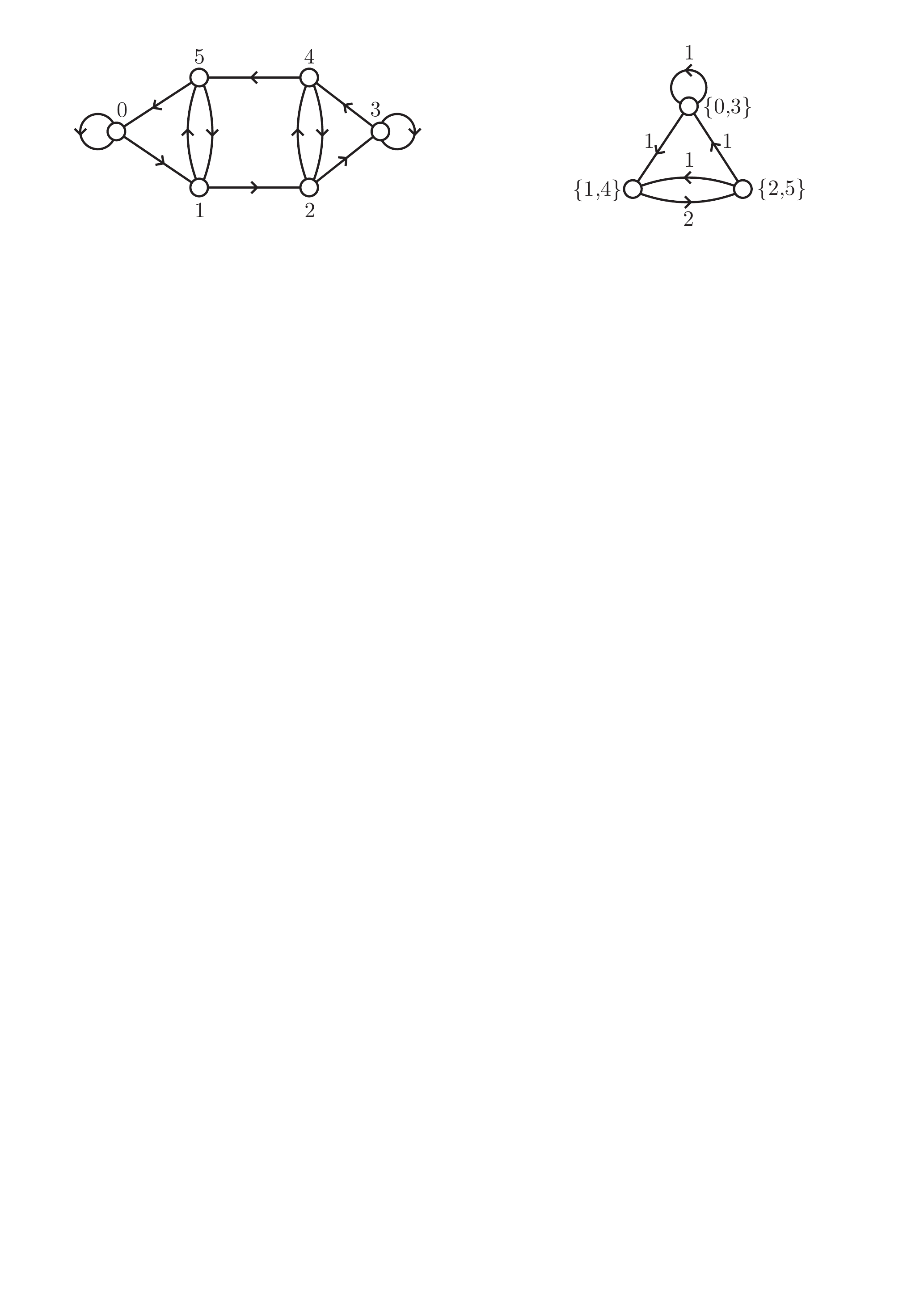}
\end{center}
\vskip-14.5cm
\caption{The digraph with adjacency matrix given by~(\ref{eq:matriu-exemple}), and its quotient digraph with weighted arcs.}
  \label{fig1}
\end{figure}

By way of example, consider the digraph of Figure~\ref{fig1}.
Then, its adjacency matrix is
\begin{equation}
\label{eq:matriu-exemple}
\A=
\left(
\begin{array}{cccccc}
1 & 1 & 0 & 0 & 0 & 0 \\
0 & 0 & 1 & 0 & 0 & 1 \\
0 & 0 & 0 & 1 & 1 & 0 \\
0 & 0 & 0 & 1 & 1 & 0 \\
0 & 0 & 1 & 0 & 0 & 1 \\
1 & 1 & 0 & 0 & 0 & 0
\end{array}
\right),
\end{equation}
with $\spec \A=\{-1,0^{(3)},1,2\}$.
A possible regular partition is constituted by the vertex subsets $U_i=\{i,i+3\}$, for
$i=0,1,2$, with characteristic matrix
$$
\S=\frac{1}{\sqrt{2}}
\left(
\begin{array}{ccc}
1 & 0 & 0 \\
0 & 1 & 0 \\
0 & 0 & 1 \\
1 & 0 & 0 \\
0 & 1 & 0 \\
0 & 0 & 1
\end{array}
\right).
$$
Then the corresponding quotient matrix is
$$
\B=\S^{\top}\A\S=\left(
\begin{array}{ccc}
1 & 1 & 0 \\
0 & 0 & 2 \\
1 & 1 & 0
\end{array}
\right),
$$
with $\spec \B=\{-1,0,2\}\subset \spec \A$, as expected.

\section{Voltage and lifted digraphs}
\label{sec:voltage}

Voltage (di)graphs are, in fact, a type of compounding that consists of connecting
together several copies of a (di)graph by setting some (directed) edges between any
two copies. Usually, the symmetry of the obtained constructions  yields digraphs
with large automorphism groups. As far as we know, one of the first papers where
voltage graphs were used for construction of dense graphs is Alegre, Fiol and
Yebra~\cite{afy86}, but without using the name of `voltage graphs'. This name was
coined previously by Gross~\cite{g74}. For more information, see Gross and
Tucker~\cite{gt87}, Baskoro, Brankovi\'{c}, Miller, Plesn\'{\i}k, Ryan and
Sir\'{a}\v{n}~\cite{bbmrs97}, and Miller and Sir\'{a}\v{n}~\cite{ms}.

Let $\G$ be a digraph with vertex set $V=V(\G)$ and arc set $E=E(\G)$.
Then, given a group $G$ with generating set $\Delta$, a
{\em voltage assignment} of $\G$ is a mapping $\alpha:E\rightarrow \Delta$. The pair $(\G,\alpha)$ is often called a {\em voltage digraph}. The {\em lifted digraph} (or, simply, {\em lift})
$\Gamma^\alpha$ is the digraph with vertex set  $V(\Gamma^\alpha)=V\times G$ and
arc set $E(\Gamma^\alpha)=E\times G$, where there is an arc from vertex $(u,g)$ to
vertex $(v,h)$ if and only if $uv\in E$ and $h=g\alpha(uv)$. Such an arc is denoted
by $(uv,g)$.

\subsection{The adjacency matrix of the lifted digraph}
It is clear that the base digraph with the voltage assignment univocally determines
the adjacency matrix of its lift.
To define it we need to consider the following concepts.
Given a (multiplicative) group $G$ together with a given ordering of its elements
$g_0(=1), g_1,\ldots, g_{n-1}$, a {\em $G$-circulant matrix} is defined as a square matrix $\A$ of order $n$ indexed by elements of $G$, with first row
$a_{0,0}=a_{g_0}$, $a_{0,1}=a_{g_1}$, $\ldots$, $a_{0,n-1}=a_{g_{n-1}}$, and entries
$$
(\A)_{g,h}=a_{hg}, \qquad g,h\in G.
$$
Thus, the elements of row $g$ are identical to those of the first row, but they are permuted by the action of $g$ on $G$.
In particular, a {\em circulant matrix} (see Davis~\cite{d79}) corresponds to a $G$-circulant matrix with the cyclic group $G=\Z_n$ and natural ordering $0,1,\ldots,n-1$. Another example is the adjacency matrix $\A$ of the Cayley digraph $\textrm{Cay}(G,\Delta)$
of the group $G$ with generating set $\Delta$, which is a $G$-circulant matrix whose first row has
elements $a_{1,j}=1$ if $g_j\in \Delta$, and $a_{1,j}=0$, otherwise.
The concept of block $G$-circulant matrix is similar, but now the
elements  $a_{g_0}$, $a_{g_1}$,\ldots, $a_{g_{n-1}}$ of the first row (and,
consequently, the other rows) are replaced by the $m\times m$ matrices (or blocks)
$\A_0=\A_{g_0}$,\ldots, $\A_{n-1}=\A_{g_{n-1}}$.

The following result is an easy consequence of the above definitions.
\begin{lemma}
Let $\G$ be a base digraph with voltage assignment $\alpha$ on the group
$G=\{g_0(=1),\ldots,g_{m-1}\}$.
Let $\G_i$ be the spanning subgraph of $\G$ with arc set $\alpha^{-1}(g_i)$, and adjacency matrix
$\A_i$, for $i=0,\ldots,m-1$. Then, the adjacency matrix $\A$ of the lifted  digraph $\G^{\alpha}$ is the block $G$-circulant matrix with first block-row $\A_0,\A_1,\ldots,\A_{m-1}$.
Moreover, the lift $\G^{\alpha}$ has a regular partition with quotient matrix
$\B=\sum_{i=0}^{m-1} \A_i$. \hfill $\Box$
\end{lemma}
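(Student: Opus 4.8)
The plan is to prove both parts by fixing convenient orderings and reading the entries of $\A$ straight off the definition of the lift. Let $\G$ have vertex set $\{v_1,\dots,v_n\}$, and keep the given ordering $g_0(=1),g_1,\dots,g_{m-1}$ of $G$. Order the $mn$ vertices of $\G^{\alpha}=V\times G$ by $G$-coordinate first, so that $\A$ is naturally partitioned into an $m\times m$ array of $n\times n$ blocks, the block in position $(g,h)$ (with $g,h\in G$) recording the arcs of $\G^{\alpha}$ running from the fibre over $g$ to the fibre over $h$.

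First I would identify this block. By the definition of $\G^{\alpha}$, an arc from $(v_a,g)$ to $(v_b,h)$ occurs once for each arc $e=v_av_b\in E(\G)$ with $h=g\alpha(e)$, i.e.\ with $\alpha(e)=g^{-1}h$; hence the number of arcs from $(v_a,g)$ to $(v_b,h)$ equals the number of arcs from $v_a$ to $v_b$ in $\G$ carrying the voltage $g^{-1}h$, which is the $(a,b)$-entry of $\A_i$ for that $i$ with $g_i=g^{-1}h$. So the block in position $(g,h)$ equals $\A_i$ with $g_i=g^{-1}h$; in particular the block-row indexed by $g_0=1$ is $\A_0,\A_1,\dots,\A_{m-1}$, and each other block-row is obtained from it by the corresponding translation of the block-columns. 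Comparing with the definition above, this is exactly the statement that $\A$ is the block $G$-circulant matrix with first block-row $\A_0,\dots,\A_{m-1}$. (For non-Abelian $G$ one has to match the left/right convention in the definition of a $G$-circulant; for the Abelian groups that are the focus here this is immaterial.)

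For the quotient matrix, I would use the obvious partition $\pi$ of $V(\G^{\alpha})$ into the $n$ fibres $U_a=\{v_a\}\times G$ and check regularity directly. Fix $(v_a,g)\in U_a$: for each arc $v_av_b\in E(\G)$ the lift contains the arc $(v_av_b,g)$ from $(v_a,g)$ to $(v_b,g\alpha(v_av_b))\in U_b$, and these account for all arcs of $\G^{\alpha}$ leaving $(v_a,g)$ with head in $U_b$; so their number is the number of arcs from $v_a$ to $v_b$ in $\G$, namely $\bigl(\sum_{i=0}^{m-1}\A_i\bigr)_{ab}$, which depends only on $a$ and $b$. Hence $\pi$ is regular with quotient matrix $\B=\sum_{i=0}^{m-1}\A_i$; equivalently, with $\S$ the normalized characteristic matrix of $\pi$ one checks $\A\S=\S\B$ by a one-line block computation, as in~(\ref{SB=AS}).

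The whole argument is routine; essentially the only thing requiring care is keeping the two orderings straight and, for non-Abelian $G$, reconciling the direction of the group action used in the lift with the left/right convention adopted in the definition of a block $G$-circulant matrix. Once the block in position $(g,h)$ has been pinned down as $\A_i$ with $g_i=g^{-1}h$, both assertions are immediate.
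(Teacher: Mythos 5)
Your proposal is correct; the paper in fact omits the proof entirely, dismissing the lemma as an easy consequence of the definitions, and your routine verification (identifying the $(g,h)$ block as $\A_i$ with $g_i=g^{-1}h$, then checking regularity of the partition into fibres) is exactly the intended argument. Your parenthetical about reconciling the lift's right action with the paper's stated convention $(\A)_{g,h}=a_{hg}$ is a fair point of care, but it does not affect the validity of either claim.
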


By way of example, let us consider the Alegre digraph, which is a $2$-regular digraph with $n=25$ vertices and diameter $k=4$ represented in Figure~\ref{fig2} (left). This digraph was found by Fiol, Yebra, and Alegre
in~\cite{FiYeAl84}. The Alegre digraph can be seen as the lifted digraph $\Gamma^\alpha$ of the base
digraph $\Gamma$ with the voltage assignments shown in Figure~\ref{fig2} (right).
Then, the nonzero blocks of the first row constituting the adjacency matrix of $\Gamma^\alpha$ are
$$
\A_0=
\left(
\begin{array}{ccccc}
0 & 1 & 1 & 0 & 0 \\
0 & 0 & 0 & 0 & 0 \\
0 & 0 & 0 & 0 & 0 \\
1 & 0 & 0 & 0 & 1 \\
0 & 0 & 0 & 0 & 0
\end{array}
\right),\quad
\A_1=
\left(
\begin{array}{ccccc}
0 & 0 & 0 & 0 & 0 \\
0 & 0 & 0 & 0 & 0 \\
0 & 0 & 0 & 1 & 0 \\
0 & 0 & 0 & 0 & 0 \\
1 & 0 & 0 & 0 & 1
\end{array}
\right),
\quad
\A_4=
\left(
\begin{array}{ccccc}
0 & 0 & 0 & 0 & 0 \\
0 & 1 & 1 & 0 & 0 \\
0 & 0 & 0 & 1 & 0 \\
0 & 0 & 0 & 0 & 0 \\
0 & 0 & 0 & 0 & 0
\end{array}
\right).
$$

\begin{figure}[t]
\begin{center}
\includegraphics[width=14cm]{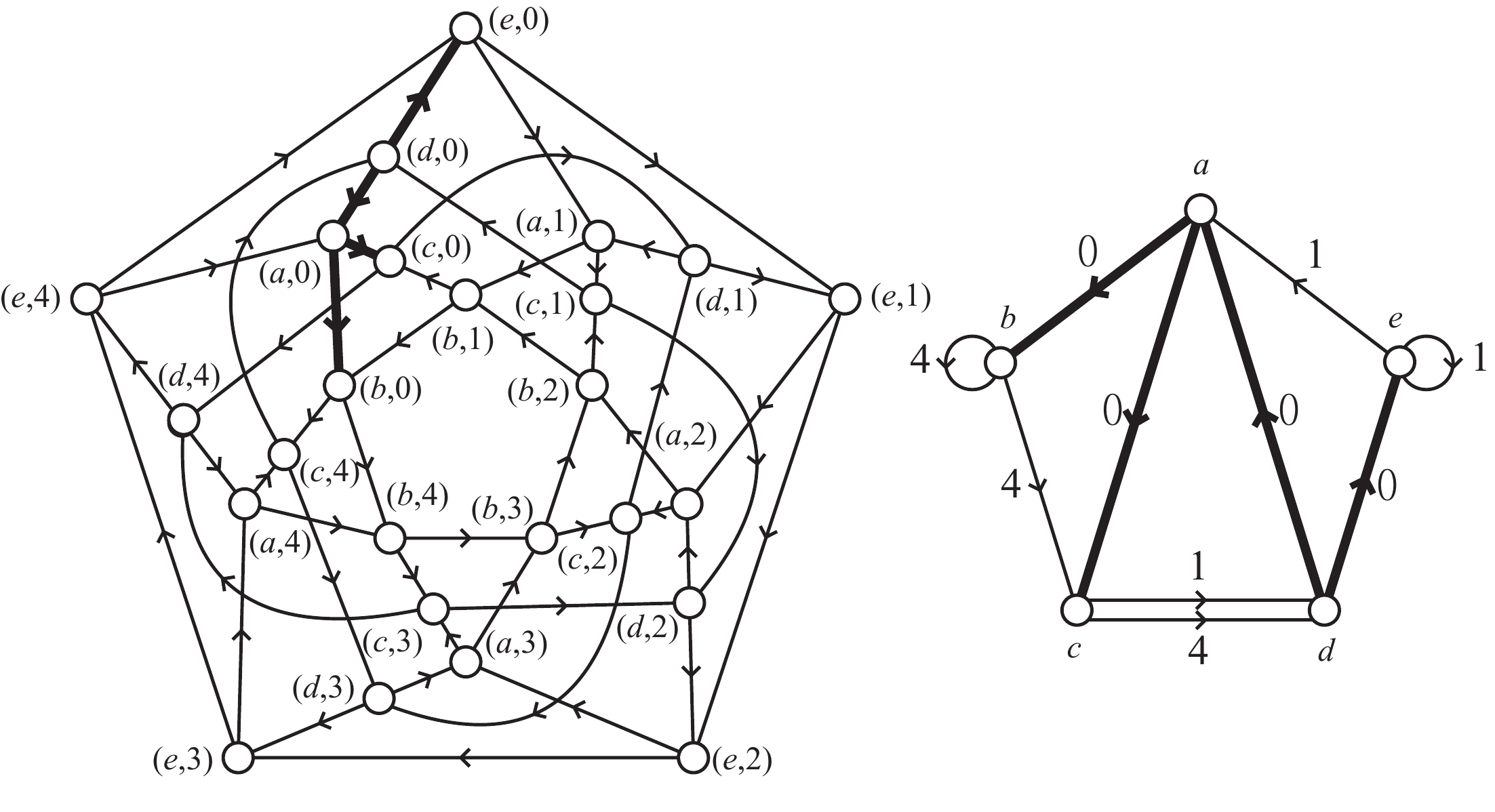}
 \end{center}
 \vskip-.5cm
\caption{The Alegre digraph (left), and its base digraph (right).
The adjacencies in the copy $0$
are represented with a thick line.}
\label{fig2}
\end{figure}

Thus, $\Gamma^\alpha$ has a regular partition with five sets of five vertices each, which is better illustrated in the drawing of Figure \ref{fig3}, with quotient matrix
\begin{equation}
\label{quotient-matrix-Alegre}
\B=\sum_{i=0}^4 \A_i=
\left(
\begin{array}{ccccc}
0 & 1 & 1 & 0 & 0 \\
0 & 1 & 1 & 0 & 0 \\
0 & 0 & 0 & 2 & 0 \\
1 & 0 & 0 & 0 & 1 \\
1 & 0 & 0 & 0 & 1
\end{array}
\right),
\end{equation}

\begin{figure}[t]
\begin{center}
\includegraphics[width=10cm]{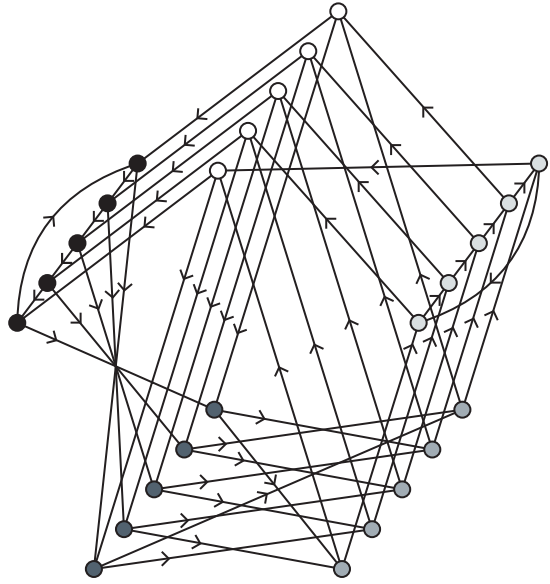}
\end{center}
\vskip -.75cm
\caption{The Alegre digraph and its regular partition with five vertex sets.}
\label{fig3}
\end{figure}

\section{Matrix representations}
\label{sec:matrixrep}
In this section, we study how to fully represent a lifted digraph with a matrix whose size equals the order of the base digraph.

First, we deal with the case when the group $G$ of the voltage assignments is cyclic. Thus, let $\G=(V,E)$ be a digraph with voltage assignment $\alpha$ on the group $G=\Z_k=\{0,1,\ldots,k-1\}$. Its
\emph{polynomial matrix} $\B(z)$ is a square matrix indexed by the vertices of $\G$, and whose elements are complex valuated polynomials in
the quotient ring $\Re_{k-1}[z]=\Re[x]/(z^k)$, where $(z^k)$ is the ideal generated by the polynomial
$z^k$. More precisely, each entry of $\B(z)$ is fully represented by a polynomial of degree at most $k-1$, say $(\B(z))_{uv}=p_{uv}(z)=\alpha_0+\alpha_1 z+\cdots +\alpha_{k-1}z^{k-1}$, where
$$
\alpha_i=\left\{
\begin{array}{ll}
1 & \mbox{if  $uv\in E$ and $\alpha(uv)=i$,}\\
0 & \mbox{otherwise,}
\end{array}
\right.
$$
for $i=0,\ldots,k-1$.
For example, in the case of the Alegre digraph in Figure~\ref{fig1}, the polynomial matrix
is
\begin{equation}
\label{B(z)Alegre}
\B(z)=
\left(
\begin{array}{ccccc}
0 & 1 & 1 & 0 & 0 \\
0 & z^4 & z^4 & 0 & 0 \\
0 & 0 & 0 & z+z^4 & 0 \\
1 & 0 & 0 & 0 & 1 \\
z & 0 & 0 & 0 & z
\end{array}
\right),
\end{equation}
where $(\B(z))_{ij}=\alpha_r z^r+\alpha_s z^s$, with $\alpha_r,\alpha_s\in\{0,1\}$ and
$r,s\in \Z_5$, means that there are arcs from vertex $(i,p)$ to vertices $(j,p+r)$
or/and $(j,p+s)$ if and only if $\alpha_r=1$ or/and $\alpha_s=1$.

Since the polynomial matrix $\B(z)$ fully represents the digraph $\G^{\alpha}$, we should be able to retrieve from the former any property of the latter.
As a first example,
the following result shows that the powers of $\B(z)$ yield the same information as the powers of the adjacency matrix of the lift $\G^{\alpha}$.
\begin{lemma}
\label{lema-walks-cyclic}
Let $(\B(z)^\ell)_{uv}=\beta_0+\beta_1z+\cdots +\beta_{k-1}z^{k-1}$. Then, for every $i=0,\ldots,k-1$,
the coefficient $\beta_i$ equals the number of walks of length $\ell$ in the lifted digraph
$\G^{\alpha}$,  from vertex $(u,h)$ to vertex $(v,h+i)$ for every $h\in G$.
In particular, $\B(1)$ is the quotient matrix of the corresponding regular partition of $\G^{\alpha}$.
\end{lemma}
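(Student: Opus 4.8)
The plan is to prove the statement by setting up a careful correspondence between entries of powers of the polynomial matrix $\B(z)$ and walks in the lift, then observing that evaluation at $z=1$ collapses this to the quotient matrix. First I would fix the interpretation: a walk of length $\ell$ in $\G^\alpha$ from $(u,h)$ to $(v,h+i)$ is a sequence of arcs $(u_0u_1,h_0),(u_1u_2,h_1),\ldots,(u_{\ell-1}u_\ell,h_{\ell-1})$ with $u_0=u$, $u_\ell=v$, $h_0=h$, and $h_{t+1}=h_t+\alpha(u_tu_{t+1})$ for each $t$. Telescoping the last condition gives $h_\ell = h + \sum_{t=0}^{\ell-1}\alpha(u_tu_{t+1})$, so the walk lands in $(v, h+i)$ precisely when the accumulated voltage $\sum_t \alpha(u_tu_{t+1})$ equals $i$ in $\Z_k$. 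Crucially, this condition depends only on the base walk $u_0u_1\cdots u_\ell$ and not on the starting group element $h$, which is already the source of the ``for every $h\in G$'' uniformity in the statement.

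The core of the argument is an induction on $\ell$. The base case $\ell=1$ is immediate from the definition of $\B(z)$: the entry $(\B(z))_{uv}=\sum_i \alpha_i z^i$ has $\alpha_i=1$ exactly when $uv\in E$ with $\alpha(uv)=i$, which counts the single-arc walks from $(u,h)$ to $(v,h+i)$. For the inductive step I would write $\B(z)^{\ell+1}=\B(z)^\ell \B(z)$ and expand the $(u,v)$ entry as $\sum_{w} (\B(z)^\ell)_{uw}(\B(z))_{wv}$, working in the quotient ring $\Re_{k-1}[z]=\Re[z]/(z^k)$ so that exponents are read modulo $k$. Multiplying $(\B(z)^\ell)_{uw}=\sum_{r}\beta^{(\ell)}_{uw,r}z^r$ by $(\B(z))_{wv}=\sum_s \gamma_{wv,s}z^s$ produces the coefficient of $z^i$ equal to $\sum_{w}\sum_{r+s\equiv i}\beta^{(\ell)}_{uw,r}\gamma_{wv,s}$; by the inductive hypothesis $\beta^{(\ell)}_{uw,r}$ counts length-$\ell$ walks from $(u,h)$ to $(w,h+r)$ and $\gamma_{wv,s}$ counts arcs from $(w,h+r)$ to $(v,h+r+s)$, so the product counts length-$(\ell+1)$ walks through $(w,h+r)$ that end at $(v,h+i)$. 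Summing over $w$ and over compatible $(r,s)$ with $r+s\equiv i \pmod k$ gives exactly all length-$(\ell+1)$ walks from $(u,h)$ to $(v,h+i)$, since the reduction modulo $z^k$ correctly implements the addition in $\Z_k$.

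Finally, for the ``in particular'' clause I would evaluate at $z=1$. By the main part, $(\B(1)^\ell)_{uv} = \sum_{i=0}^{k-1}\beta_i$ counts all length-$\ell$ walks from $(u,h)$ to $(v, h')$ for any target $h'$, i.e.\ all walks in $\G^\alpha$ from a fixed lift of $u$ to any lift of $v$; equivalently $(\B(1))_{uv}=\sum_i \alpha_i$ is precisely the number of arcs from a fixed vertex in the fibre over $u$ to the whole fibre over $v$, which is the definition of the quotient matrix entry $c_{uv}$ of the regular partition into fibres. Alternatively one can simply note $\B(1)=\sum_{i=0}^{k-1}\A_i=\B$ by the preceding lemma on block $G$-circulant matrices, which identifies $\B(1)$ with the stated quotient matrix. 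I expect no genuine obstacle here; the one point requiring care is bookkeeping the modular reduction $z^k \equiv 0$ consistently with addition in $\Z_k$ throughout the inductive multiplication, and making explicit that every count is independent of the base point $h$ so that the single polynomial entry legitimately records the walk count for \emph{all} fibres simultaneously.
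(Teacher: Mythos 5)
Your proof is correct and follows essentially the same route as the paper: the paper's own proof is just the one-line remark that the cases $\ell=0,1$ are clear and the rest follows by induction, so your write-up is a fully detailed version of exactly that argument (base case from the definition of $\B(z)$, inductive step via the coefficient convolution in $(\B(z)^{\ell}\B(z))_{uv}=\sum_w(\B(z)^\ell)_{uw}(\B(z))_{wv}$, and evaluation at $z=1$ for the quotient matrix). One slip to fix, which you have inherited from the paper's own notation: the ring in which ``exponents are read modulo $k$'' is $\Re[z]/(z^k-1)$, not $\Re[z]/(z^k)$; reducing modulo the ideal $(z^k)$ sets $z^k=0$ and would discard every walk whose accumulated voltage is a nonzero multiple of $k$, which contradicts your (correct) convolution condition $r+s\equiv i \pmod{k}$ and the paper's own computation of $\B(z)^4$ for the Alegre digraph, where terms such as $z\cdot z^4=z^5$ contribute to the constant coefficient. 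Replacing $z^k\equiv 0$ by $z^k\equiv 1$ throughout, everything you wrote goes through.
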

\begin{proof}
The result is clear for $\ell=0,1$. Then, the result follows easily by using induction.
\end{proof}


For instance, in the case of Alegre digraph, we get
\begin{equation*}
\label{B(z)^4}
\B(z)^4\!=\!
\left(
\begin{array}{ccccc}
2+z^2+z^3 & z+z^2+z^4 & z+z^2+z^4 & z^2+z^4 & 2+z^2+z^3 \,\\
z+z^2+2z^4 & 1+z+z^3 & 1+z+z^3 & z+z^3 & z+z^2+2z^4 \,\\
z+z^3 & 2+z^2+z^3 & 2+z^2+z^3 & 2+z^2+z^3 & z+z^3 \,\\
z+z^3+z^4 & 1+z^2+z^3 &  1+z^2+z^3 & 2+z^2+z^3 & z+z^3+z^4 \,\\
1+z^2+z^4 & z+z^3+z^4 & z+z^3+z^4 & 2z+z^3+z^4 & 1+z^2+z^4 \,
\end{array}
\right).
\end{equation*}
Moreover, the first row of $\I+\B(z)+\B(z)^2+\B(z)^3+\B(z)^4$ has entries:
$3+z+z^2+z^3+z^4$, $1+z+z^2+z^3+2z^4$, $1+z+z^2+z^3+2z^4$, $1+z+z^2+z^3+2z^4$,
$2+z+z^2+z^3+z^4$.
Note that all coefficients $\alpha_i$, for $i=0,\ldots,4$, of these
polynomials are non-zero. This shows that the eccentricities of the vertices $(u,0)$, for $u\in V$, are 4, as expected since this is the diameter of $\Gamma^{\alpha}$.
By reading as columns, this means that if $\A$ is the adjacency matrix of the Alegre digraph, then, the first row of $\I+\A+\A^2+\A^3+\A^4$ is $3,1,1,1,2,1,1,\ldots,1,2,2,2,1$.

Note that, in the above result, the products of the entries (polynomials) of $\B(z)$ must be understood in the ring $\mathbb{R}_{k-1}[z]$. In terms of the vectors of the polynomial coefficients, this means that, in fact, we are carrying out a circular convolution. Thus, an efficient way of doing so is by using the discrete Fourier transform, since convolution in one variable is equivalent to entrywise product in the other.
In the next subsection, we give details and one example of this procedure.

\subsection{Using the discrete Fourier transform}
Recall that the {\em discrete Fourier transform} (DFT) of a vector $\z=(z_0,z_1,\ldots,z_{n-1})\in \C^n$ is the vector $\vecZ=(Z_0,Z_1,\ldots,Z_{n-1})\in \C^n$ with components
$$
Z_k=\frac{1}{\sqrt{n}}\sum_{\ell=0}^{n-1}z_{\ell}\omega^{-i k\ell},\qquad k=0,\ldots, n-1,
$$
where $\omega=e^{i\frac{2\pi}{n}}$ is the $n$-th root of the unity.
Then the inverse transform is
$$
z_{\ell}=\frac{1}{\sqrt{n}}\sum_{k=0}^{n-1}Z_{k}\omega^{i k\ell},\qquad \ell=0,\ldots, n-1,
$$
This is usually written as $\Z=\Fou\{\z\}=\F\z$ and $\z=\Fou^{-1}\{\z\}=\overline{\F}\z$, where
$\F$ is the matrix  with entries $f_{k\ell}=\frac{1}{\sqrt{n}}\omega^{-k\ell}$.
Here we use the property of the discrete Fourier transform
$$
\Fou(\z\ast\y)=\sqrt{n}(\Fou\{\z\}\circ\Fou\{\y\})
$$
where $\ast$ is the cyclic convolution, defined as
$$
(\z\ast\y)_i=\sum_{j=0}^{n-1}\z_j\y_{i-j(\!\!\!\!\!\!\mod n)},\qquad i=0,\ldots,n-1,
$$
and $\circ$ is the entrywise product of vectors
$$
(\z\circ \y)_i=\z_i\y_i,\qquad i=0,\ldots,n-1.
$$

\subsection*{The Alegre digraph revisited}
Let us see the application of this method by using again the Alegre digraph.
First, we consider the vectors of coefficients of the involved polynomials:
\begin{align*}
1\ &\rightarrow\ \z_0=(1,0,0,0,0);\\
z\ &\rightarrow\ \z_1=(0,1,0,0,0);\\
z^4\ &\rightarrow\ \z_4=(0,0,0,0,1);\\
z+z^4\ &\rightarrowº \z_{1,4}=\z_1+\z_4.
\end{align*}
with corresponding Fourier transforms (with entries rounded to three decimals):
\begin{align*}
\y_0 &=\Fou\{\z_0\}=(0.447,0.447,0.447,0.447,0.447);\\
\y_1 &=\Fou\{\z_1\}=(0.447,0.138-0.425i,-0.362-0.263i,-0.362+0.263i,0.138+0.425i); \\
\y_4 &=\Fou\{\z_4\}=(0.447,0.138+0.425i,-0.362+0.263i,-0.362-0.263i,0.138-0.425i); \\
\y_{1,4} &=\y_1+\y_4.
\end{align*}
This leads to the matrices:
$$
\Y_0 =\left(
\begin{array}{ccccc}
0& 0.447 & 0.447 & 0 & 0\\
0& 0.447 & 0.447 & 0 & 0\\
0& 0 & 0& 0.894 & 0\\
0.447& 0& 0& 0& 0.447\\
0.447& 0& 0& 0& 0.447
\end{array}
\right);
$$
$$
\Y_1 =\left(
\begin{array}{ccccc}
0& 0.447 & 0.447 & 0 & 0\\
0& 0.138+0.425i & 0.138+0.425i & 0 & 0\\
0& 0 & 0& 0.276 & 0\\
0.447& 0& 0& 0& 0.447\\
0.138-0.425i& 0& 0& 0& 0.138-0.425i
\end{array}
\right);
$$
$$
\Y_2 =\left(
\begin{array}{ccccc}
0& 0.447 & 0.447 & 0 & 0\\
0& -0.362+0.263i & -0.362+0.263i & 0 & 0\\
0& 0 & 0& -0.7236 & 0\\
0.447& 0& 0& 0& 0.447\\
-0.362-0.263i& 0& 0& 0& -0.362-0.263i
\end{array}
\right);
$$
$$
\Y_3 =\left(
\begin{array}{ccccc}
0& 0.447 & 0.447 & 0 & 0\\
0& -0.362-0.263i & -0.362-0.263i & 0 & 0\\
0& 0 & 0& -0.7236 & 0\\
0.447& 0& 0& 0& 0.447\\
-0.362+0.263i& 0& 0& 0& -0.362+0.263i
\end{array}
\right);
$$
$$
\Y_4 =\left(
\begin{array}{ccccc}
0& 0.447 & 0.447 & 0 & 0\\
0& 0.138-0.425i & 0.138-0.425i & 0 & 0\\
0& 0 & 0& 0.276 & 0\\
0.447& 0& 0& 0& 0.447\\
0.138+0.425i& 0& 0& 0& 0.138+0.425i
\end{array}
\right);
$$
Then, for instance, the $(0,0)$-entries of the matrices $\Y_0^4$, $\Y_1^4$ $\Y_2^4$, $\Y_3^4$, $\Y_4^4$
are the entries of the vector $\y_{0,0}=(0.160,0.015,0.105,0.105,0.015)$, with inverse Fourier
transform $(0.179,0,0.089,0.089,0)$. Hence, since $(\sqrt{5})^3(0.179,0,0.089,0.089,0)\approx
(2,0,1,1,0)$, we conclude that
$$
(\B(z)^4)_{0,0}=2+z^2+z^3,
$$
in concordance with \eqref{B(z)^4}.

\subsection{The case of non-cyclic groups}
In the case when $G$ is not a cyclic group, we can generalize the above matrix representation in the following manner:
Let $G$ be a group with generating set $\Delta=\{g_0,\ldots,g_{m-1}\}$. Given two vectors $\a,\b\in \Na^m$ (where
$\Na=\{0,1,2,\ldots\}$), their \emph{$G$-convolution} $\a\ast_{G}\b$ is defined to be the vector of
$\Na^m$ with components
$$
(\a\ast_{G}\b)_i=\sum_{j,k\,:\,g_jg_k=g_i} a_jb_k.
$$
Let $\G=(V,E)$ be a digraph with voltage assignment $\alpha$ on the group $G$. Its
\emph{$G$-representation matrix} $\B_G$ is a square matrix indexed by the vertices of $\G$, and whose
elements are vectors of $\Na^m$, $(\B_G)_{uv}=\b_{uv}$, where
$$
(\b_{uv})_i=\left\{
\begin{array}{ll}
1 & \mbox{if  $\exists\ uv\in E$ : $\alpha(uv)=g_i$,}\\
0 & \mbox{otherwise,}
\end{array}
\right.
$$
for $i=1,\ldots,m$.
The product of $\B_G$ by itself, denoted by $\B_G^2$, has entries
$$
\b_{uv}^{(2)}=\sum_{w\in V} \b_{uw}\ast_{G}\b _{wv},
$$
and the power matrix $\B_G^{\ell}$ is computed as expected.
The following result shows how such a power matrix contains the information about the walks in the lifted
digraph $\G^{\alpha}$.
\begin{lemma}
\label{lema-walks-general}
If the $uv$-entry of $\B_G^{\ell}$ is $\b_{uv}^{(\ell)}=(\beta_1,\beta_2,\ldots,\beta_n)$, then, for
every $i=1,\ldots,n$, there are $\beta_i$ walks of length $\ell$ from vertex $(u,h)$, $h\in G$, to
vertex $(v,hg_i)$ of the lifted graph $\G^{\alpha}$. \hfill $\Box$
\end{lemma}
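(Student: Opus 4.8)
The plan is to mimic the proof of Lemma~\ref{lema-walks-cyclic}, replacing the cyclic convolution in $\mathbb{R}_{k-1}[z]$ by the $G$-convolution $\ast_G$, and to proceed by induction on the walk length $\ell$. The crucial point is to verify that concatenating walks in the lift $\G^{\alpha}$ corresponds, at the level of the $G$-representation matrix, exactly to the matrix product whose scalar multiplication is $\ast_G$.

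First I would settle the base cases. For $\ell=0$ we have $\B_G^0=\I$, whose $uv$-entry is the zero vector when $u\ne v$ and, when $u=v$, the vector with a $1$ in the coordinate indexed by the identity $g_0=1$ and $0$ elsewhere; this matches the unique trivial walk of length $0$ from $(u,h)$ to $(u,hg_0)=(u,h)$. For $\ell=1$ we have $\B_G^1=\B_G$, and by the very definition of the lift there is an arc from $(u,h)$ to $(v,hg_i)$ if and only if $uv\in E$ and $\alpha(uv)=g_i$, so their number is precisely $(\b_{uv})_i$.

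For the inductive step, assume the assertion for some $\ell\ge 1$ and write $\B_G^{\ell+1}=\B_G^{\ell}\B_G$, so that
$$
(\b_{uv}^{(\ell+1)})_i=\sum_{w\in V}\ \sum_{j,k\,:\,g_jg_k=g_i}(\b_{uw}^{(\ell)})_j\,(\b_{wv})_k .
$$
Every walk of length $\ell+1$ from $(u,h)$ to $(v,hg_i)$ in $\G^{\alpha}$ splits uniquely as a walk of length $\ell$ from $(u,h)$ to some vertex $(w,h')$ followed by a single arc from $(w,h')$ to $(v,hg_i)$. Writing $h'=hg_j$, the induction hypothesis gives $(\b_{uw}^{(\ell)})_j$ choices for the first part; and the final arc exists precisely when $wv\in E$ and $(hg_j)\alpha(wv)=hg_i$, i.e.\ $\alpha(wv)=g_j^{-1}g_i=:g_k$, equivalently $g_jg_k=g_i$, so the number of such arcs is $(\b_{wv})_k$. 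Summing over $w\in V$ and over all admissible pairs $(j,k)$ recovers exactly the displayed expression, completing the induction.

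I expect the only real care to be needed in the voltage bookkeeping: since $G$ need not be Abelian, the net voltage of a walk must be read as the product of the arc voltages in the order traversed, and one must check that the condition $g_jg_k=g_i$ defining $\ast_G$ is consistent with this left-to-right convention (appending an arc of voltage $g_k$ to a walk of voltage $g_j$ yields net voltage $g_jg_k$). Once this is fixed, the argument is a routine induction entirely parallel to Lemma~\ref{lema-walks-cyclic}; indeed, specializing $G=\Z_k$ turns $\ast_G$ into the cyclic convolution used there, so that Lemma~\ref{lema-walks-general} is the natural common generalization.
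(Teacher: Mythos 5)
Your proof is correct and is exactly the induction the paper has in mind: the paper omits the argument (and for the cyclic analogue, Lemma~\ref{lema-walks-cyclic}, merely says the claim is clear for $\ell=0,1$ and follows by induction), while you carry out that induction in full, including the one point that genuinely needs checking in the non-Abelian setting, namely that appending an arc of voltage $g_k$ to a walk of net voltage $g_j$ yields net voltage $g_jg_k$, matching the condition defining $\ast_G$. No gaps; if anything, your write-up silently (and correctly) reads the vectors as indexed by all of $G$ rather than just the generating set, which is what the lemma requires for the powers $\B_G^{\ell}$ to make sense.
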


In particular, if $G$ is an Abelian group, say $G=\Z_{k_1}\times\cdots\times \Z_{k_{n}}$, with $m=|G|=\prod_{i=1}^n k_i$, the vectors representing the entries of $\B_G$ can be replaced by polynomials with $n$ variables $z_1,\ldots,z_n$. Namely, $(\B_G)_{uv}=\sum_{i_1,\ldots,i_n}\alpha_{i_1,\ldots,i_n}z_1^{i_1}\cdots
z_n^{i_n}$, where
$$
\alpha_{i_1,\ldots,i_n}=\left\{
\begin{array}{ll}
1 & \mbox{if  $\exists\ uv\in E$ : $\alpha(uv)=(g_1,\ldots,g_m)\in G$,}\\
0 & \mbox{otherwise.}
\end{array}
\right.
$$
Then, as in the case of cyclic groups, we compute the powers  $\B_G^{\ell}$ using the standard polynomial multiplication, and the coefficients of the resulting polynomial entries gives the same information described in Lemmas \ref{lema-walks-cyclic} and \ref{lema-walks-general}. As an example of this case, in the next section we use the known fact that the Hoffman-Singleton graph can be constructed as a lift of a base graph on two vertices, with voltages in the group $\Z_5\times \Z_5$.

\section{The spectrum of the lifted digraph}
Apart from the obvious approach of computing the characteristic polynomial of the adjacency matrix, we aim to get a more simple  method of computing the whole spectrum of the lifted digraph $\G^{\alpha}$.
First, we have the following simple result, which is a consequence of Lemma \ref{lema-sp}.
\begin{corollary}
\label{coro-ev-lift}
Let $\G$ be a base digraph with vertices $u_1,\ldots,u_n$, and a given voltage assignment $\alpha$ on the
group $G$ with generating set $\{g_0,\ldots,g_{m-1}\}$. Let $\B=\sum_{i=0}^{m-1} \A_i$, where $\A_i$ is the adjacency matrix of the
subgraph of $\G$ with arc set $\alpha^{-1}(g_i)$. Then,
$$
\spec \B \subset \spec \G^{\alpha}.
$$
\vskip -1.1cm
\hfill  $\Box$
\end{corollary}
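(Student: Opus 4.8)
The plan is to deduce Corollary~\ref{coro-ev-lift} directly from Lemma~\ref{lema-sp} by exhibiting $\B=\sum_{i=0}^{m-1}\A_i$ as the quotient matrix of a regular partition of the lift $\G^{\alpha}$. Recall from the excerpt (the Lemma in Section~\ref{sec:voltage}) that the adjacency matrix $\A$ of $\G^{\alpha}$ is the block $G$-circulant matrix with first block-row $\A_0,\A_1,\ldots,\A_{m-1}$, and that the same Lemma already asserts $\G^{\alpha}$ has a regular partition with quotient matrix $\B=\sum_{i=0}^{m-1}\A_i$. So the corollary is essentially the composition of that Lemma with Lemma~\ref{lema-sp}; the proof just needs to make the composition explicit.

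First I would describe the partition: the vertex set $V(\G^{\alpha})=V\times G$ is partitioned into the $n$ classes $U_u=\{(u,g):g\in G\}$, one for each vertex $u$ of the base digraph $\G$. Next I would verify that $\pi=\{U_{u_1},\ldots,U_{u_n}\}$ is regular by checking the combinatorial criterion from Section~\ref{sec:reg-part}: the number of arcs from a fixed vertex $(u,g)\in U_u$ to the class $U_v$ equals the number of arcs $uv\in E(\G)$ (each such arc $uv$ with voltage $\alpha(uv)=g_i$ contributes exactly one arc, namely $(uv,g)$ from $(u,g)$ to $(v,gg_i)$), which is $\sum_{i=0}^{m-1}(\A_i)_{uv}=(\B)_{uv}$, independent of $g$. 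Hence $\pi$ is a regular partition whose quotient matrix is exactly $\B$. Equivalently, one can argue at the matrix level: with $\S$ the normalized characteristic matrix of $\pi$, one computes $\S^{\top}\A\S=\B$ and $\A\S=\S\B$ using the block $G$-circulant structure, since summing each block-row of $\A$ over the $G$-coordinate collapses $\A_0,\ldots,\A_{m-1}$ into their sum.

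With regularity established, Lemma~\ref{lema-sp} applies verbatim to the partition $\pi$ of $\A$, giving $\spec\B\subset\spec\A=\spec\G^{\alpha}$, which is the claim.

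I do not anticipate a genuine obstacle here: the statement is a corollary, and all the structural work (the block $G$-circulant form of $\A$ and the existence of the regular partition with quotient $\B$) is already packaged in the earlier Lemma. The only point requiring a moment's care is confirming that the class $U_v$ receives from each $(u,g)$ exactly one arc per base-arc $uv$ regardless of the group element $g$ — this is precisely where the homogeneity of the voltage construction (arcs of $\G^{\alpha}$ are indexed by $E\times G$ with the target shifted by $\alpha(uv)$) is used, and it is routine. So the proof is short: name the partition, invoke the regularity criterion, identify the quotient matrix as $\B$, and cite Lemma~\ref{lema-sp}.
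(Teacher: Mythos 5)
Your proposal is correct and follows exactly the route the paper intends: the corollary is stated as a direct consequence of Lemma~\ref{lema-sp} combined with the earlier lemma asserting that $\G^{\alpha}$ admits a regular partition (into the fibres $U_u=\{(u,g):g\in G\}$) with quotient matrix $\B=\sum_{i=0}^{m-1}\A_i$. Your explicit verification of the regularity of that partition fills in precisely the routine step the paper leaves to the reader.
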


For example, the Alegre digraph has quotient matrix $\B=\B(1)$ given in \eqref{quotient-matrix-Alegre}, with spectrum $\spec \B=\{2,0^{(2)},i,-i\}$.
In fact, the spectrum of the Alegre digraph is
\begin{equation}
\label{sp-Alegre}
\spec \Gamma^\alpha=\left\{2,0^{(10)},i^{(5)},-i^{(5)}, \frac{1}{2} (-1+\sqrt{5})^{(2)},
\frac{1}{2}(-1-\sqrt{5})^{(2)}\right\},
\end{equation}
where, in agreement with Corollary \ref{coro-ev-lift}, we observe that $\spec \B\subset\spec
\Gamma^\alpha$.
Notice also that, in this case, the other eigenvalues of $\G$ are those ($\neq 2$) of the undirected cycle $C_5$, whose spectrum is
$$
\spec C_5=\left\{2,\frac{1}{2} (-1+\sqrt{5})^{(2)},
\frac{1}{2}(-1-\sqrt{5})^{(2)}\right\}.
$$

\subsection{The case of cyclic groups}
The following result shows how the spectrum of the lift $\G^{\alpha}$ can be completely determined from the spectrum of the polynomial matrix $\B(z)$ in the case when voltages are taken in a cyclic group. Here we assume that entries of $\B(z)$ are elements of the polynomial ring $\C(z)$. If $\Gamma$ has $r$ vertices and the cyclic group has order $k$, the characteristic polynomial $P(\lambda,z)={\rm det}(\lambda I - \B(z))$ is, technically, a polynomial in two complex variables $\lambda$ and $z$, of degree $r$ in $\lambda$ and at most $k-1$ in $z$. As we shall see, however, later we will be interested only in the corresponding polynomials in $\lambda$ arising by substituting suitable complex roots of unity for $z$.

\begin{proposition}
\label{propo-sp}
Let $\G=(V,E)$ be a base digraph on $r$ vertices, with a voltage assignment $\alpha$ in $\Z_k$. Let $P(\lambda,z)={\rm det}(\lambda I - \B(z))$ be the characteristic polynomial of the polynomial matrix $\B(z)$ of the voltage digraph $(\G,\alpha)$. For $j=0,\ldots,k-1$, let $\omega_j$  be the distinct $k$-th complex roots of unity. Then, the spectrum of the lift $\G^{\alpha}$ is the multiset of $kr$ roots $\lambda$ of the $k$ polynomials $P(\lambda,\omega_j)$ of degree $r$ each, where $0\le j\le k-1$; formally,
$$
\spec \G^{\alpha}=\{\lambda_{i,j}\, :\, P(\lambda_{i,j},\omega_j)=0,\ 1\le i\le r,\ 0\le j\le k-1\}.
$$
\end{proposition}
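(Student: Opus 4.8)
The plan is to diagonalize the block $G$-circulant adjacency matrix $\A$ of the lift by means of the discrete Fourier transform on $\Z_k$, and to recognize the blocks of the resulting block-diagonal matrix as exactly the evaluations $\B(\omega_j)$. By the earlier lemma, $\A$ is the block $G$-circulant matrix with first block-row $\A_0,\A_1,\ldots,\A_{k-1}$, where $\A_i$ is the adjacency matrix of the spanning subgraph carrying voltage $i\in\Z_k$, and each $\A_i$ is $r\times r$. Writing $\P$ for the $k\times k$ cyclic permutation matrix (the adjacency matrix of the directed $k$-cycle), one has the clean identity
\begin{equation*}
\A=\sum_{i=0}^{k-1}\A_i\otimes \P^{i},
\end{equation*}
after fixing the ordering of the $kr$ vertices of $\G^\alpha$ so that the $G$-coordinate varies in blocks. (One must check the orientation convention so that an arc $(uv,g)$ from $(u,g)$ to $(v,g+\alpha(uv))$ corresponds to a $1$ in the claimed position; if the convention in the paper gives $\P^{-i}$ instead, the argument is identical with $\omega_j$ replaced by $\overline{\omega_j}$, and since $\{\omega_j\}=\{\overline{\omega_j}\}$ as a set, the statement is unaffected.)

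Next I would diagonalize $\P$. Its eigenvalues are precisely the $k$-th roots of unity $\omega_0,\ldots,\omega_{k-1}$, with $\P=\F^{*}\,\diag(\omega_0,\ldots,\omega_{k-1})\,\F$ for the (unitary) Fourier matrix $\F$ on $\Z_k$; hence $\P^{i}=\F^{*}\,\diag(\omega_0^{i},\ldots,\omega_{k-1}^{i})\,\F$. Conjugating the displayed expression for $\A$ by $\I_r\otimes\F$ gives
\begin{equation*}
(\I_r\otimes\F)\,\A\,(\I_r\otimes\F)^{*}
=\sum_{i=0}^{k-1}\A_i\otimes \diag(\omega_0^{i},\ldots,\omega_{k-1}^{i})
=\bigoplus_{j=0}^{k-1}\ \Bigl(\sum_{i=0}^{k-1}\omega_j^{i}\,\A_i\Bigr),
\end{equation*}
where the last equality just reorders the direct sum over the $k$ Fourier modes. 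But $\sum_{i=0}^{k-1}\omega_j^{i}\A_i$ is exactly $\B(z)$ evaluated at $z=\omega_j$, since $(\B(z))_{uv}=\sum_i \alpha_i z^i$ with $\alpha_i=1$ iff $uv\in E$ and $\alpha(uv)=i$. Therefore $\A$ is unitarily similar to the block-diagonal matrix $\bigoplus_{j=0}^{k-1}\B(\omega_j)$, and consequently its characteristic polynomial factors as $\prod_{j=0}^{k-1}\det(\lambda\I_r-\B(\omega_j))=\prod_{j=0}^{k-1}P(\lambda,\omega_j)$. Reading off the roots with multiplicity yields the claimed description of $\spec\G^\alpha$ as the multiset of all $\lambda_{i,j}$ with $P(\lambda_{i,j},\omega_j)=0$, $1\le i\le r$, $0\le j\le k-1$, a total of $kr$ values as expected. (As a sanity check, the mode $j=0$ gives $\B(1)=\sum_i\A_i=\B$, recovering $\spec\B\subset\spec\G^\alpha$ from Corollary~\ref{coro-ev-lift}.)

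The only genuinely delicate point — and the one I would be most careful about — is the bookkeeping in the first step: getting the tensor-product identity $\A=\sum_i\A_i\otimes\P^{i}$ with the correct exponent and the correct placement of $\A_i$ versus $\A_i^{\top}$. This hinges on the precise indexing convention for a ($G$-)circulant matrix adopted earlier ($(\A)_{g,h}=a_{hg}$) together with the definition of the lift, and on ordering the vertex set $V\times G$ lexicographically with $V$ as the "outer" index. Once that is pinned down, everything else is the standard fact that circulants are simultaneously diagonalized by the Fourier matrix, packaged through the Kronecker product; no eigenvalue is lost or gained because the similarity is by an invertible (indeed unitary) matrix, so algebraic multiplicities are preserved and the union is genuinely a multiset union. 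No non-degeneracy hypothesis on $\B(z)$ is needed, and the statement does not even require $\G^\alpha$ to be undirected, so the argument is purely linear-algebraic.
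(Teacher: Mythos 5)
Your proof is correct, but it takes a genuinely different route from the paper's. You block-diagonalize the adjacency matrix of the lift directly: writing $\A=\sum_{i}\A_i\otimes \Pi^{i}$ for the $k\times k$ cyclic shift $\Pi$ and conjugating by $\I\otimes\F$, you obtain a unitary similarity $\A\sim\bigoplus_{j}\B(\omega_j)$, whence the characteristic polynomial of $\A$ factors as $\prod_j P(\lambda,\omega_j)$ and the multiset statement follows at once. The paper instead argues in two stages: first it takes an eigenvector $\x(\omega)$ of $\B(\omega)$ and lifts it to an explicit eigenvector $\phi(\omega)=(\omega^{i}x_u(\omega))_{(u,i)}$ of $\G^{\alpha}$ with the same eigenvalue, which exhibits $rk$ eigenvalue--eigenvector pairs; then, because this by itself does not settle algebraic multiplicities (e.g.\ when some $\B(\omega_j)$ is not diagonalizable, or when eigenvalues recur across different $j$), it closes the argument with a trace computation, showing $\tr(\A^{\ell})=\sum_{j}\tr(\B(\omega_j)^{\ell})$ for all $\ell\ge 0$ and invoking the power-sum (Girard--Waring) identities to conclude that the two multisets coincide. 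Your similarity argument buys a cleaner handling of multiplicities --- they are preserved automatically, with no diagonalizability or moment argument needed --- and it makes the factorization of the characteristic polynomial explicit; the paper's construction buys explicit eigenvectors of the lift, which are useful in their own right. Your bookkeeping on the tensor identity is sound: with vertices of $V\times G$ ordered with $V$ as the outer index, the $(g,h)$ entry of the $(u,v)$ block is $1$ exactly when $h-g=\alpha(uv)$, which is $\sum_i(\A_i)_{uv}(\Pi^{i})_{g,h}$, and your remark that a reversed orientation convention merely permutes the set $\{\omega_j\}$ is the right way to dispose of that ambiguity.
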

\begin{proof}
Although entries of $\B(z)$ are polynomials in a complex variable $z$, in what follows let $z$ be any fixed complex number for which we will make appropriate choices later. For our $z\in \C$, let $\x(z)=(x_u(z))_{u\in V}$ be an eigenvector corresponding to an eigenvalue $\lambda(z)$ of the matrix $\B(z)$ of $\G$; that is,
\begin{equation}
\label{eq-ev-quotient}
\sum_{uv\in E}z^{\alpha(uv)}x_v(z)=\lambda(z)x_{u}(z).
\end{equation}
Let $R(k)$ denote the set of $k$-th roots of unity. Making now the choice $z=\omega\in R(k)$ in \eqref{eq-ev-quotient} we obtain
$$
\sum_{uv\in E}\omega^{\alpha(uv)}x_v(\omega)=\lambda(\omega)x_{u}(\omega).
$$
Multiplying by $\omega^{i}$ for any (fixed) $i\in \Z_{k}$, we have
\begin{equation}
\label{eq-ev-quotient2}
\sum_{uv\in E}\omega^{i+\alpha(uv)}x_v(\omega)=\lambda(\omega)\omega^i x_{u}(\omega).
\end{equation}
Now, for every pair $(u,j)\in V\times \Z_k$, let the map $\phi_{(u,j)}:R(k)\rightarrow \C$
be defined as
$$
\phi_{(u,j)}(\omega)=\omega^j x_u(\omega).
$$
Then, as $\phi_{(v,i+\alpha(uv))}(\omega)=\omega^{i+\alpha(uv)}x_v(\omega)$, we can rewrite
\eqref{eq-ev-quotient2} in the form
$$
\sum_{uv\in E} \phi_{(v,i+\alpha(uv))}(\omega)=\lambda(\omega)\phi_{(u,i)}(\omega).
$$
But this means that $\lambda(\omega)$ is an eigenvalue of the lift, corresponding to the eigenvector
$\phi(\omega):= (\phi_{(u,i)}(\omega))_{(u,i)\in V\times \Z_k}$.

Since $\lambda=\lambda(\omega)$ is a root of the characteristic polynomial $P(\lambda,\omega)$,  we
obtain, in this way, a total of $rk$ eigenvalues (including repetitions), which is the number of eigenvalues of the adjacency matrix $\A$ of the lift $\G^{\alpha}$.

According to the properties of the polynomial matrix, if
$$
(\B(z)^{\ell})_{uu}=\alpha_{u,0}^{(\ell)}+\alpha_{u,1}^{(\ell)} z+\cdots +\alpha_{u,k-1}^{(\ell)}z^{k-1}
$$
then, the total number of rooted closed $\ell$-walks in $\G^{\alpha}$ is
$$
\tr (\A^\ell)=\sum_{\lambda\in \spec \G^{\alpha}} \lambda^\ell=k\sum_{u\in V} \alpha_{u,0}^{(\ell)}.
$$
But, since $\sum_{j=0}^{k-1}\omega_j^{\ell}=0$ for every $j,\ell\neq 0$, we have that
$$
\alpha_{u,0}^{(\ell)}=\frac{1}{k}\sum_{j=0}^{k-1} (\B(\omega_j))_{uu}.
$$
Then,
$$
\sum_{\lambda\in \spec \G^{\alpha}} \lambda^\ell=\tr (\A^\ell)
=\sum_{u\in V}\sum_{j=0}^{k-1} (\B(\omega_j))_{uu}
=\sum_{j=0}^{k-1}\tr (\B(\omega_j))
=\sum_{j=0}^{k-1}\sum_{\mu\in \spec \B(\omega_j)} \mu^{\ell}.
$$
Since this is true for any value of $\ell\ge 0$, both (multi)sets of eigenvalues must coincide (see for example Gould  \cite{go99}).
\end{proof}

Returning to the example of the Alegre digraph, its polynomial matrix $\B(z)$ in \eqref{B(z)Alegre}
has eigenvalues $0$, with multiplicity 2, and $i$, $-i$, and $z+\frac{1}{z}$ with multiplicity 1.
Then, from Proposition \ref{propo-sp}, evaluating them at the $5$-th roots of unity $\omega_i$, for $i=0,1,2,3,4$, we get the complete spectrum \eqref{sp-Alegre} of the digraph, see Table \ref{evs-Alegre}.


\begin{table}[h]
\centering
\begin{tabular}{|c||l|l|l|l|}
\hline
  $z\backslash \lambda(z)$    & $0^{(2)}$ & $i^{(1)}$ & $-i^{(1)}$ & $(z+\frac{1}{z})^{(1)}$\\[.1cm]
\hline\hline
1 &  $0^{(2)}$ & $i^{(1)}$ & $-i^{(1)}$ & $2^{(1)}$ \\[.1cm]
\hline
$\omega$ &   $0^{(2)}$ & $i^{(1)}$ & $-i^{(1)}$ & $\frac{1}{2} (-1+\sqrt{5})^{(1)}$\\[.1cm]
\hline
$\omega^2$ &   $0^{(2)}$ & $i^{(1)}$ & $-i^{(1)}$ & $\frac{1}{2} (-1-\sqrt{5})^{(1)}$   \\[.1cm]
\hline
$\omega^3$ &   $0^{(2)}$ & $i^{(1)}$ & $-i^{(1)}$& $\frac{1}{2} (-1+\sqrt{5})^{(1)}$   \\[.1cm]
\hline
$\omega^4$ &   $0^{(2)}$ & $i^{(1)}$ & $-i^{(1)}$ & $\frac{1}{2} (-1-\sqrt{5})^{(1)}$   \\[.1cm]
 \hline
\end{tabular}
\caption{The eigenvalues of the Alegre digraph.}
\label{evs-Alegre}
\end{table}

\subsection{The digraphs ${\cal P}(n,p_1,p_2)$}
As another example of application, consider the following family of digraphs, which  contains, as a particular case, the well-known generalized Petersen graphs; see for example Gera and St\v{a}nic\v{a} \cite{gs11}. (Notice that all of our results apply also to graphs, since they can be just considered as symmetric digraphs where each digon represents an edge.)
Given an integer $n$ and two polynomials $p_1,p_2\in \Re_{n-1}[z]$, the digraph ${\cal P}(n,p_1,p_2)$
is obtained by `cyclically joining' $n$ (undirected) edges. More precisely, ${\cal P}(n,p_1,p_2)$ is the lift of the base digraph with polynomial matrix
$$
\B(z)=\left(
\begin{array}{cc}
p_1(z) & 1 \\
1 & p_2(z)
\end{array}
\right).
$$
Then, from Proposition \ref{propo-sp}, the eigenvalues of ${\cal P}(n,p_1,p_2)$ are
\begin{equation}
\label{ev-P(p1,p2)}
\lambda_{i,j}=\frac{1}{2}\left[ p_1(\omega_j)+ p_2(\omega_j)\pm \sqrt{ (p_1(\omega_j)+ p_2(\omega_j))^2+4}\right], \ i=0,1,\  j=0,\ldots,n-1,
\end{equation}
where $\omega_j$ is the $j$-th $n$-root of unity.
This includes the results of  Gera and St\v{a}nic\v{a} \cite{gs11} about the spectra of the generalized Petersen graphs, denoted by $P(n,k)$, where $n\in N$ and $k\in \Z_n$.
Recall that $P(n,k)$ has $2n$ vertices, $u_0,\ldots,u_{n-1}$ and $v_0,\ldots,v_{n-1}$, with edges $(u_i,v_i)$, $(u_i,u_{i\pm 1})$, and $(v_i,v_{i\pm k})$ (arithmetic understood modulo $n$). Then, in our context, $P(n,k)$ corresponds to the case when we take the polynomials $p_1(z)=z+z^{-1}$ and $p_2(z)=z^k+z^{-k}$ .
Thus, when we evaluate these polynomials at $\omega_j$, we get
$$
p_1(w_j)=2\cos \left(\frac{j 2\pi}{n} \right)\qquad \mbox{and}\qquad p_2(w_j)=2\cos \left(\frac{j k 2\pi}{n}\right),
$$
and \eqref{ev-P(p1,p2)} yields the result in \cite{gs11} (Theorem 2.4 and Corollary 2.5).

\subsection{The Hoffman-Singleton graph}

As commented in the previous section, when the group $G$ is Abelian, the lift can be constructed similarly by using multivariate polynomials as the entries of $\B_G$. Moreover the above spectral theory also works with the natural changes. Instead of `repeating ourselves' by giving the analogous of  Proposition \ref{propo-sp}, we follow an example with the Hoffman-Singleton (HS) graph, first discovered in \cite{hs60}. As it is common knowledge, this  is a Moore $7$-regular graph with $50$ vertices, and diameter $2$ (for more details, see e.g. Hoffman and Singleton \cite{hs60} or  Godsil \cite{g93}). As it was shown by \v{S}iagiov\'a  \cite{s01} (see also Mirka and \v{S}ir\'a\v{n} \cite{ms}), the HS graph can be obtained as the lift of a base graph $G$ consisting of two vertices with voltages in the group $\Z_5\times \Z_5$, see Figure \ref{fig4}, and polynomial matrix
$$
\B(w,z)=\left(
\begin{array}{cc}
w+\frac{1}{w} & 1+zw+z^2w^4+z^3w^4+z^4w^4 \\
1+\frac{1}{zw}+\frac{1}{z^2w^4}+\frac{1}{z^3w^4}+\frac{1}{z^4w^4} & w^2+\frac{1}{w^2}
\end{array}
\right).
$$

\begin{figure}[t]
\begin{center}
\includegraphics[width=8cm]{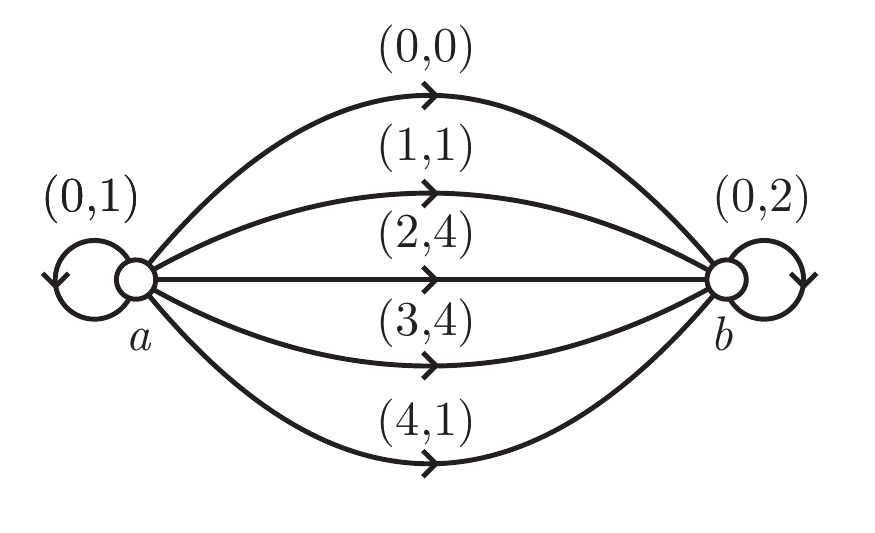}
\end{center}
\vskip -.8cm
\caption{A base graph for the HS graph.}
  \label{fig4}
\end{figure}

Then, by giving to $(w,z)$ the 25 possible values in $R(5)\times R(5)$, the eigenvalues of $\B(w,z)$ are shown in Table \ref{evs-HS}.

\begin{table}[h]
\centering
\begin{tabular}{|c||l|l|l|l|l|}
\hline
  $z\backslash w$    & 1        & $\omega$ & $\omega^2$ & $\omega^3$ & $\omega^4$ \\
\hline\hline
1           &  $7,-3$  & $2,-3$   &  $2,-3$    &  $2,-3$   &  $2,-3$    \\
\hline
$\omega$     &  $2,2$  & $2,-3$   &  $2,-3$    &  $2,-3$   &  $2,-3$  \\
\hline
$\omega^2$  &  $2,2$  & $2,-3$   &  $2,-3$    &  $2,-3$   &  $2,-3$  \\
\hline
$\omega^3$  &  $2,2$  & $2,-3$   &  $2,-3$    &  $2,-3$   &  $2,-3$   \\
\hline
 $\omega^4$  &  $2,2$  & $2,-3$   &  $2,-3$    &  $2,-3 $   &  $2,-3$  \\
 \hline
\end{tabular}
\caption{The eigenvalues of the HS graph.}
\label{evs-HS}
\end{table}

As a consequence, we have that the spectrum of the HS graph is
$$
\spec {\rm HS}=\{7^{(1)}, 2^{(28)}, -3^{(21)}\},
$$
as it is well known, as the HS graphs is strongly regular.


\newpage

\end{document}